\theoremstyle{plain}
\newtheorem{lemma}{Lemma}[section]
\newtheorem{theorem}{Theorem}[section]
\theoremstyle{definition}
\newtheorem{definition}{Definition}[section]
\theoremstyle{remark}
\newtheorem{remark}{Remark}[section]
\numberwithin{equation}{section}
\newcommand{\p}{\partial}
\newcommand{\norm}[1]{\left\Vert#1\right\Vert}
\newcommand{\dist}{\mathrm{dist}}
\newcommand{\diam}{\mathrm{diam}}
\newcommand{\R}{\mathbb{R}}
\newcommand{\fei}{\varphi}
\newcommand{\Rmnum}[1]{\mathrm{\expandafter\@slowromancap\romannumeral#1@}}
\begin{document}
\title[Uniqueness of Transonic Shock Solutions]
{Uniqueness of Transonic Shock Solutions in a Duct for Steady
 Potential Flow}

\author{Gui-Qiang Chen}
\author{Hairong Yuan}
\address{Gui-Qiang Chen:  School of Mathematical Sciences, Fudan University,
 Shanghai 200433, China; Department of Mathematics, Northwestern University,
 Evanston, IL 60208-2730, USA}
\email{gqchen@math.northwestern.edu}
\address{Hairong Yuan:
Department of Mathematics, East China Normal University, Shanghai
200241, China} \email{hryuan@math.ecnu.edu.cn,\,
hairongyuan0110@gmail.com}

\keywords{uniqueness, transonic shock, free boundary, Bernoulli law,
maximum principle, potential flow, duct}
\subjclass[2000]{35J25,35B35,35B50,76N10,76H05}
\date{\today}

\begin{abstract}
We study the uniqueness of solutions with a transonic shock in a
duct in a class of transonic shock solutions, which are not
necessarily small perturbations of the background solution, for
steady potential flow. We prove that, for given uniform supersonic
upstream flow in a straight duct, there exists a unique uniform
pressure at the exit of the duct such that a transonic shock
solution exists in the duct, which is unique modulo translation. For
any other given uniform pressure at the exit, there exists no
transonic shock solution in the duct. This is equivalent to
establishing a uniqueness theorem for a free boundary problem of a
partial differential equation of second order in a bounded or
unbounded duct. The proof is based on the maximum/comparison
principle and a judicious choice of special transonic shock
solutions as a comparison solution.
\end{abstract}
\maketitle


\section{Introduction and Main Results}

In the recent years, there has been an increasing interest in the
study of transonic shock solutions in ducts or nozzles for the
steady potential flow equation or steady full Euler system for
compressible fluids.
The basic strategy is to construct first some transonic shock
solutions and then study the stability of these solutions by
perturbations of the boundary conditions; see
\cite{ChF1,ChF2,ChF3,CCF,CY,XY,Yu2,Yu3} and the references cited
therein. On the other hand, some basic properties of these special
transonic shock solutions, such as the uniqueness in a large class
of solutions, have not been fully understood. As a first step, in
this paper, we study the uniqueness of solutions with a flat
transonic shock in a straight duct in a class of transonic shock
solutions, which are not necessarily small perturbations of the
background solution, for steady potential flows.
Some classical, related results on transonic flows may be found in
\cite{CC,Morawetz1} and the references cited therein.

Consider steady isentropic irrotational inviscid flows in a finite
duct $D:=(-1,1)\times\Omega\subset\R^3$ or a semi-infinitely long
duct $D':=(-1,\infty)\times\Omega\subset\R^3$, where $\Omega\subset
\R^2$ is a bounded domain with $C^3$ boundary.

The governing equations of potential flows are the conservation of
mass and the Bernoulli law
(cf. \cite{CF}):
\begin{eqnarray}
&&\nabla\cdot(\rho\nabla\fei)=0,\label{101}\\
&&\frac{1}{2}|\nabla\fei|^2+ i(\rho)=b_0, \label{102}
\end{eqnarray}
where $\varphi$ is the velocity potential (i.e., $\nabla\varphi$ is
the velocity), $b_0$ is the Bernoulli constant determined by the
incoming flow and/or boundary conditions, $\rho$ is the density, and
$$
i'(\rho)=\frac{p'(\rho)}{\rho}=\frac{c^2(\rho)}{\rho}
$$
with $c(\rho)$ being the sound speed and $p(\rho)$ the pressure. For
polytropic gas, by scaling,
\begin{equation}\label{gamma-law}
p(\rho)=\frac{\rho^\gamma}{\gamma},\qquad c^2(\rho)=\rho^{\gamma-1},
\qquad i(\rho)=\frac{\rho^{\gamma-1}-1}{\gamma-1}, \qquad \gamma>1.
\end{equation}
%
%
%
%
In particular, when $\gamma=1$ as the limiting case $\gamma\to 1$,
\begin{equation}\label{gamma-1}
i(\rho)=\ln \rho.
\end{equation}
Expressing $\rho$ in terms of $|\nabla\varphi|^2$:
$$
\rho=\rho(|\nabla\varphi|^2)
=\Big(1+(\gamma-1)(b_0-\frac{1}{2}|\nabla\varphi|^2)\Big)^{\frac{1}{\gamma-1}}
\qquad \text{for}\,\, \gamma>1,
$$
or
$$
\rho=\rho(\nabla\varphi|^2) =e^{-\frac{1}{2}|\nabla\varphi|^2+b_0}
\qquad \text{for}\,\, \gamma=1,
$$
equation \eqref{101} becomes
\begin{equation}\label{103-a}
\nabla\cdot(\rho(|\nabla\fei|^2)\nabla\fei)=0.
\end{equation}
Equation \eqref{103-a} is a second order equation of mixed
elliptic-hyperbolic type for $\fei$ in general; it is elliptic if
and only if the flow is subsonic, i.e., $|\nabla\fei|<c$ or
equivalently, $|\nabla\fei|<
c_*:=\sqrt{\frac{2}{\gamma+1}\big(1+(\gamma-1)b_0\big)}$ for
$\gamma>1$ and $c_*=1$ for $\gamma=1$.

\medskip
We first consider the case of a finite duct $D$. Let
$\Gamma=[-1,1]\times\p \Omega$ be the lateral wall, and let
$\Sigma_i=\{i\}\times \Omega$, $i=-1,1$, be respectively the entry
and exit of $D$. That is, we assume

\medskip
\noindent {\rm ($H_1$)} \qquad\qquad\qquad $\p_0\fei\ge0$
\qquad on \,\,\, $\Sigma_i=\{i\}\times\Omega,\,\, i=-1,1$.

\medskip
\noindent Note that $\p D=\Sigma_{-1}\cup\Sigma_{1}\cup\Gamma$. We
are interested in the case that the flow is uniform and supersonic
(i.e., $|\nabla\fei|>c$) on $\Sigma_{-1}$; subsonic on $\Sigma_1$
with uniform pressure. More specifically, for a constant $u^-\in(
c_*, \sqrt{2(b_0+\frac{1}{\gamma-1})})$ and a constant $c_1\in(0,
c_*)$,
we consider the following
problem:
\begin{eqnarray}
& \mbox{\eqref{103-a}}
                   &\text{in}\ \  D,\label{103}\\
& \fei=-u^-, \quad  \p_{x_1}\fei=u^-   &\text{on}\ \  \Sigma_{-1},\label{104}\\
&|\nabla\fei|=c_1  &\text{on}\ \  \Sigma_{1}, \label{106}\\
&\nabla\fei\cdot n=0 & \text{on}\ \  \Gamma, \label{0107}
\end{eqnarray}
where $n$ is the outward unit normal on $\Gamma$.

We remark that the formulation of this boundary problem is
physically natural. Since the flow is supersonic near $\Sigma_{-1}$,
i.e., the equation is hyperbolic on $\Sigma_{-1}$, there should be
initial data like \eqref{104} due to $(H_1)$ (Our choice of $\fei$
in \eqref{104} makes the solution of the  uniform supersonic
upstream flow in $D$ looks neatly; see Lemma \ref{lem101} below). On
the other hand, since the equation is elliptic on $\Sigma_1$, only
one boundary condition is necessary. We choose the Bernoulli-type
condition \eqref{106} since, from the physical point of view,
assigning the pressure (i.e. density for isentropic flow) is of more
interest (cf. \cite{CF}), which is just a boundary condition like
\eqref{106} due to \eqref{102}. Condition \eqref{0107} is the
natural impermeability condition, i.e., the slip boundary condition,
on the lateral wall for inviscid flow.

We are interested in the class of piecewise smooth solutions with a
transonic shock for problem \eqref{103}--\eqref{0107}.

\begin{definition}\label{def101}
For a $C^1$ function $x_1=f(x_2,x_3)$ defined on $\bar{\Omega}$, let
\begin{eqnarray*}
&& S=\{(f(x_2,x_3), x_2, x_3)\in D \,:\, (x_2,x_3)\in \Omega\},\\
&&D^-=\{(x_1,x_2,x_3)\in D\, :\, x_1<f(x_2,x_3)\},\\
&&D^+=\{(x_1,x_2,x_3)\in D\,:\, x_1>f(x_2,x_3)\}.
\end{eqnarray*}
Then $\fei\in C^{0,1}(D)\cap C^2(D^-\cup D^+)$ is a {\it transonic
shock solution} of \eqref{103}--\eqref{0107} if it is supersonic in
$D^-$ and subsonic in $D^+$, satisfies equation \eqref{103-a} in
$D^-\cup D^+$ and the boundary conditions \eqref{104}--\eqref{0107}
pointwise, the Rankine-Hugoniot jump condition:
\begin{equation}
\rho(|\nabla\fei^+|^2)\nabla\fei^+\cdot
\nu=\rho(|\nabla\fei^-|^2)\nabla\fei^-\cdot \nu \qquad\text{on}\,\,
S,\label{rh}
\end{equation}
and the physical entropy condition:
\begin{equation}
\rho(|\nabla\fei^+|^2)>\rho(|\nabla\fei^-|^2) \quad
\Longleftrightarrow \quad |\nabla\fei^+|<|\nabla\fei^-|
\qquad\text{on}\,\, S, \label{entropy}
\end{equation}
where $\nu$ is the normal vector of $S$, and $\fei^+$ ($\fei^-$) is
the right (left)  limit of $\fei$ along $S$.  The surface $S$ is
also called a {\it shock-front}.
\end{definition}

\begin{remark}
Note that, across a transonic shock-front $S$, the potential $\fei$
is continuous, while the velocity $\nabla\fei$ is discontinuous.
Since the shock-front is a free boundary that requires to be solved
simultaneously with the flow behind it, we have to deal with a free
boundary problem indeed. In the following, we also write
$\fei^\pm=\fei|_{D^{\pm}}$ with $\fei^-$ the supersonic flow and
$\fei^+$ the subsonic flow.
\end{remark}

We first state two direct facts.

\begin{lemma}\label{lem101}
There exists a unique supersonic flow solution $\fei^-$ that
satisfies \eqref{103}--\eqref{104} and \eqref{0107} in the class of
$C^2$ supersonic flow solutions in $D$. The unique solution is
$\fei^-=u^-x_1$.
\end{lemma}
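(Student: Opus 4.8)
The plan is to show that the only $C^2$ supersonic solution of \eqref{103-a} in $D$ satisfying the Cauchy data \eqref{104} on $\Sigma_{-1}$ together with the slip condition \eqref{0107} on $\Gamma$ is the linear function $\fei^-=u^-x_1$. First I would verify that $\fei^-=u^-x_1$ is indeed such a solution: it has constant gradient $\nabla\fei^-=(u^-,0,0)$, so $\rho(|\nabla\fei^-|^2)$ is constant and \eqref{103-a} reduces to $\rho\,\Delta\fei^-=0$, which holds; on $\Sigma_{-1}$ one has $\fei^-=-u^-$ and $\p_{x_1}\fei^-=u^-$; and on $\Gamma=[-1,1]\times\p\Omega$ the outward normal $n$ has vanishing $x_1$-component, so $\nabla\fei^-\cdot n=0$. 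The constraint $u^-\in(c_*,\sqrt{2(b_0+1/(\gamma-1))})$ guarantees $c_*<|\nabla\fei^-|=u^-$, i.e.\ the solution is genuinely supersonic (and $\rho>0$), so it lies in the asserted class.

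For uniqueness, the key point is that on $\Sigma_{-1}$ the equation is hyperbolic (the flow is supersonic there), so the Cauchy problem with data \eqref{104} is well posed and its solution is determined, at least locally, by the method of characteristics; one then continues the solution as long as it remains supersonic, which is exactly the standing hypothesis. Concretely I would argue as follows. Suppose $\psi\in C^2(\bar D)$ is another supersonic solution of \eqref{103-a}, \eqref{104}, \eqref{0107}. Set $w=\psi-\fei^-$. On $\Sigma_{-1}$ we have $w=0$ and $\p_{x_1}w=0$; since $w$ is independent of the tangential derivatives only through these, in fact $\nabla w=0$ on all of $\Sigma_{-1}$ (the tangential derivatives of $w$ vanish because $w\equiv0$ there, and $\p_{x_1}w=0$ by \eqref{104}). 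Writing \eqref{103-a} in nondivergence form,
\[
\sum_{i,j=1}^{3}a_{ij}(\nabla\fei)\,\p_i\p_j\fei=0,\qquad
a_{ij}(q)=\rho(|q|^2)\delta_{ij}+2\rho'(|q|^2)q_iq_j,
\]
and subtracting the equations for $\psi$ and $\fei^-$, we see $w$ satisfies a linear second-order equation $\sum a_{ij}(\nabla\fei^-)\p_i\p_j w+\sum b_k\,\p_k w=0$ with bounded coefficients $b_k$ (coming from the difference of the nonlinear terms, expressed via the mean value theorem). Because both flows are supersonic, the coefficient matrix $a_{ij}(\nabla\fei^-)=a_{ij}(u^-,0,0)$ has signature $(+,+,-)$ with the negative direction along $x_1$: indeed $a_{11}=\rho+2\rho'(u^-)^2(u^-)^2=\rho(1-(u^-)^2/c^2)<0$ since $|\nabla\fei^-|=u^->c$, while $a_{22}=a_{33}=\rho>0$. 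Hence in a neighbourhood of $\Sigma_{-1}$ the operator is strictly hyperbolic with timelike direction $x_1$, and Holmgren's uniqueness theorem (or a direct energy estimate for the linear hyperbolic equation, using that $\Gamma$ is characteristic-free for the slip condition and the initial data vanish to first order) forces $w\equiv0$ near $\Sigma_{-1}$, hence, by continuation along $x_1$ as long as supersonicity persists — which is assumed throughout $D$ — on all of $D$.

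The main obstacle I anticipate is handling the lateral boundary $\Gamma$ in the hyperbolic uniqueness argument: one must check that the energy identity for $w$ picks up only a nonnegative (or vanishing) boundary term on $\Gamma$. This follows from the Neumann-type condition $\nabla w\cdot n=0$ on $\Gamma$ and the structure of $a_{ij}$: since $n$ is orthogonal to the $x_1$-axis, the flux of the energy through $\Gamma$ is controlled by the tangential derivatives of $w$ and drops out after an integration by parts along the slices $\{x_1=\text{const}\}$. A clean way to package everything is to multiply the linear equation for $w$ by $\p_{x_1}w$, integrate over $D_t:=(-1,t)\times\Omega$, and use Grönwall's inequality in $t$; the supersonic condition makes the resulting quadratic form in $(\p_{x_1}w,\nabla_{x_2,x_3}w)$ positive definite on the slices, and the vanishing Cauchy data kill the $t=-1$ contribution, yielding $w\equiv0$. (For the semi-infinite duct $D'$ the same argument applies slice by slice, with no condition needed at $x_1=+\infty$.)
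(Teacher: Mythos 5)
Your proposal is correct and takes essentially the same route as the paper, whose proof simply checks that $\fei^-=u^-x_1$ satisfies \eqref{103}--\eqref{104} and \eqref{0107} and then invokes standard energy estimates for hyperbolic equations to obtain uniqueness in the class of $C^2$ supersonic flows; your write-up just fills in those standard details. Two minor caveats: Holmgren is not applicable here (the coefficients are only as regular as $\nabla\psi$, not analytic), so the energy/continuation argument is the one to rely on, and in the continuation step the slices $\{x_1=\mathrm{const}\}$ remain space-like because on the boundary of the coincidence set one has $\nabla\psi=(u^-,0,0)$ with $u^->c$ --- supersonicity of the speed $|\nabla\psi|$ alone would not make the $x_1$-direction timelike for the linearized operator.
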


\begin{proof}
It is clearly that $\fei^-=u^-x_1$ solves \eqref{103}--\eqref{104}
and \eqref{0107}. By standard energy estimates for hyperbolic
equations, this solution is unique in the class of $C^2$ supersonic
flow.
\end{proof}

\begin{lemma}\label{lem103}
For each $t\in(-1,1)$, the function with a flat transonic
shock-front:
\begin{eqnarray}
\fei_t(x_1,x_2,x_3)=\begin{cases}
u^-x_1,& -1\le x_1<t,\\
u^+(x_1-t)+u^-t, & t<x_1,
\end{cases}
\end{eqnarray}
solves problem \eqref{103}--\eqref{0107} with $c_1=u^+$, where
$u^+\in(0, c_*)$ is determined by $u^-$, $b_0$, and $\gamma\ge 1$.
\end{lemma}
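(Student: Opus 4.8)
The plan is to verify directly that the piecewise-linear function $\fei_t$ satisfies every requirement in Definition \ref{def101}. Since $\fei_t$ is linear on each of the two pieces $D^-=\{x_1<t\}$ and $D^+=\{x_1>t\}$ with $\nabla\fei_t^-=(u^-,0,0)$ and $\nabla\fei_t^+=(u^+,0,0)$ both constant, equation \eqref{103-a} holds trivially on $D^-\cup D^+$ because $\rho(|\nabla\fei_t|^2)$ is constant on each piece and hence $\nabla\cdot(\rho\nabla\fei_t)=0$. The boundary condition \eqref{104} is immediate from the formula $\fei_t=u^-x_1$ near $\Sigma_{-1}$; the slip condition \eqref{0107} holds since $\nabla\fei_t$ is parallel to the $x_1$-axis while $n$ is tangent to it on $\Gamma=[-1,1]\times\p\Omega$; and \eqref{106} with $c_1=u^+$ holds on $\Sigma_1$ provided $u^+\in(0,c_*)$. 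Continuity of $\fei_t$ across the flat shock-front $S=\{x_1=t\}$ is built into the definition by matching values at $x_1=t$.

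The substantive point is to define $u^+$ correctly and check the Rankine--Hugoniot and entropy conditions across the flat front $S=\{x_1=t\}$, where $\nu=(1,0,0)$. The jump condition \eqref{rh} reduces to the scalar equation $\rho((u^+)^2)\,u^+=\rho((u^-)^2)\,u^-$, i.e. the mass flux $m(u):=\rho(u^2)\,u$ takes the same value at $u^-$ and $u^+$. I would analyze the function $m(u)=u\big(1+(\gamma-1)(b_0-\tfrac12 u^2)\big)^{1/(\gamma-1)}$ for $\gamma>1$ (and $m(u)=u\,e^{-u^2/2+b_0}$ for $\gamma=1$) on the interval $u\in(0,\sqrt{2(b_0+\frac{1}{\gamma-1})})$: one computes $m'(u)=\rho+u^2\rho'$ and, using $\rho'(s^2)\cdot 2s = \rho'\cdot$ (with $i'(\rho)=c^2/\rho$ giving $\frac{d\rho}{d(u^2/2)}=-\rho/c^2$), shows $m'(u)=\rho\,(1-u^2/c^2)$, so $m$ is strictly increasing on the subsonic range $\{u<c_*\}$, attains its maximum at the sonic point $u=c_*$, and is strictly decreasing on the supersonic range $\{u>c_*\}$. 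Hence for the given supersonic $u^-\in(c_*,\sqrt{2(b_0+\frac1{\gamma-1})})$ there is exactly one subsonic value $u^+\in(0,c_*)$ with $m(u^+)=m(u^-)$; this defines $u^+$ as a function of $u^-$, $b_0$, and $\gamma$, and automatically $u^+<c_*$. The entropy condition \eqref{entropy} amounts to $|\nabla\fei_t^+|=u^+<u^-=|\nabla\fei_t^-|$, which holds because $u^+<c_*<u^-$.

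The main (and essentially only) obstacle is the monotonicity analysis of the mass-flux function $m(u)$ — verifying $m'(u)=\rho(u^2)(1-u^2/c^2(\rho(u^2)))$ and thereby that $m$ is unimodal with peak at the sonic speed $c_*$. This is the standard derivation underlying the existence of a unique conjugate subsonic state to any given supersonic state at a normal shock, but it must be done carefully with the polytropic relations \eqref{gamma-law} and the Bernoulli law \eqref{102} to confirm that the resulting $u^+$ indeed lies in $(0,c_*)$ and depends only on the stated data. Once this is in hand, all the remaining checks in Definition \ref{def101} are immediate from the linearity of $\fei_t$ on each side and the flatness of $S$, so the lemma follows by simply assembling these verifications for each fixed $t\in(-1,1)$.
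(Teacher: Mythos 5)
Your proposal is correct and takes essentially the same route as the paper: both reduce the lemma to the algebraic Rankine--Hugoniot/Bernoulli relations across the flat front $\{x_1=t\}$ and then verify the remaining conditions of Definition \ref{def101} directly from the piecewise-linear structure. The only difference is that the paper delegates the existence and uniqueness of the subsonic state $u^+<c_*<u^-$ to a calculation cited from Proposition 3 of \cite{Yu1}, whereas you carry out that step explicitly via the mass-flux monotonicity $m'(u)=\rho\,(1-u^2/c^2)$, which is a correct, self-contained filling-in of the cited argument.
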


\begin{proof}
This is equivalent to solving $u^+$ from the following two algebraic
equations deduced from \eqref{102} and \eqref{rh}:
\begin{eqnarray}
&&\rho^-u^-=\rho^+u^+,\\
&&\frac{(\rho^+)^{\gamma-1}-1}{\gamma-1}+\frac{1}{2}(u^+)^2
=\frac{(\rho^-)^{\gamma-1}-1}{\gamma-1}+\frac{1}{2}(u^-)^2.
\end{eqnarray}
The calculation similar to Proposition 3 in \cite{Yu1} indicates
that there exists a unique solution $u^+<c_*<u^-$. One can then
easily verify that $\fei_t$ constructed above is a transonic shock
solution.
\end{proof}

We remark that this special solution has played a significant role
in the study of transonic shocks in the recent years for the
potential flow equation and the full Euler system (cf.
\cite{ChF1,ChF2,ChF3,CCF,CY,XY,Yu2,Yu3}). It has been observed to be
unstable if the pressure is given  at the exit in general. Theorem
\ref{thm101} below provides a simple and direct confirmation of this
instability.

\begin{theorem}\label{thm101}
Under assumption {\rm ($H_1$)}, for given $u^-\in(c_*,
\sqrt{2(b_0+\frac{1}{\gamma-1})})$, problem
\eqref{103}--\eqref{0107} is solvable for transonic shock solutions
in the sense of Definition {\rm 1.1} if and only if $c_1=u^+$, with
$u^+$ being a  constant in $(0, c_*)$ determined by $u^-$, $b_0$,
and $\gamma\ge 1$. In addition, the solution is unique modulo
translation: it is exactly $\fei_t$ for $t\in(-1,1)$.
\end{theorem}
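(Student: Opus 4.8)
The plan is to prove both the necessity of $c_1=u^+$ and the uniqueness modulo translation by a single maximum/comparison-principle argument, using the flat transonic shock solutions $\fei_t$ from Lemma \ref{lem103} as comparison functions. First I would reduce the problem to the subsonic region: by Lemma \ref{lem101}, the supersonic part of any transonic shock solution must be $\fei^-=u^-x_1$, so a transonic shock solution is completely determined by the shock-front $S=\{x_1=f(x_2,x_3)\}$ and the subsonic potential $\fei^+$ on $D^+$. On $S$, continuity of $\fei$ gives $\fei^+ = u^- f$, and the Rankine--Hugoniot condition \eqref{rh} together with \eqref{102} becomes a pointwise algebraic relation between $\nabla\fei^+\cdot\nu$ and the (known) upstream data; the entropy condition \eqref{entropy} picks out the subsonic branch. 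So $\fei^+$ solves the quasilinear elliptic equation \eqref{103-a} in $D^+$ with a Neumann-type oblique condition \eqref{0107} on the lateral wall, the Bernoulli/pressure condition $|\nabla\fei^+|=c_1$ on $\Sigma_1$, and a coupled Dirichlet--oblique (free boundary) condition on $S$.

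The heart of the argument is then a comparison of $\fei^+$ with a translate $\fei_t$. The key observation is that both $\fei^+$ and $\fei_t|_{D^+}$ satisfy the same quasilinear elliptic equation, so their difference $w=\fei^+-\fei_t$ satisfies a linear elliptic equation (with coefficients frozen along a segment joining the two gradients) with no zeroth-order term, hence obeys the strong maximum principle and the Hopf lemma; moreover $w$ satisfies a homogeneous oblique condition on $\Gamma$. I would then argue that, if we translate so that $t$ is chosen appropriately (e.g.\ so that the two shock-fronts touch, $\max(f - t)$ or $\min(f-t)$ attained), the maximum principle forces the interior maximum or a boundary extremum of $w$ to lie where it is impossible: on $S$ the coupling of the Dirichlet datum $\fei^+=u^- f$ with the R--H relation lets one show that at a touching point of the free boundaries the normal derivative of $w$ has the wrong sign for Hopf's lemma unless $w\equiv 0$; and on $\Sigma_1$ the condition $|\nabla\fei^+|=c_1=|\nabla\fei_t|=u^+$ (for the right value of $c_1$) again pins down the gradient and rules out a strict extremum via Hopf. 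Running this at both the maximal and minimal translate squeezes $f$ between two constants, forcing $f\equiv t$ for some $t$ and then $w\equiv 0$, i.e.\ $\fei^+=\fei_t$.

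To get the necessity statement, I would run the same comparison before knowing $c_1=u^+$: suppose a transonic shock solution exists for some exit pressure $c_1$. Translating $\fei_{t}$ until its flat front first touches $S$ from one side and applying Hopf at the touching point, one derives an inequality between $c_1$ and $u^+$; doing the same from the other side (touching at the opposite extremum of $f$) gives the reverse inequality. Together these force $c_1=u^+$, and then the uniqueness argument above applies verbatim. The main obstacle I anticipate is the analysis at the free boundary $S$: one must verify that the R--H condition, as a boundary condition for the single unknown $\fei^+$, is genuinely oblique (the relevant direction is non-tangential to $S$ — this should follow from the entropy condition and the supersonic/subsonic structure, as in \cite{ChF1}), and that the combination of the Dirichlet relation $\fei^+=u^-f$ with the R--H relation behaves monotonically in the ``touching'' direction so that Hopf's lemma yields a contradiction of the correct sign; the required monotonicity is essentially the statement that the shock polar is monotone, i.e.\ the computation behind Lemma \ref{lem103}. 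The lateral-wall condition \eqref{0107} and the exit condition \eqref{106} are comparatively routine once the equation for $w$ is seen to be linear and homogeneous, but some care is needed near the edges $\Sigma_i\cap\Gamma$ and, in the semi-infinite case $D'$, to control $w$ at infinity (here one uses that $\fei_t$ is exactly linear and a barrier/Phragmén--Lindelöf argument to prevent escape of the extremum to infinity).
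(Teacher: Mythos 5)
Your proposal follows essentially the same route as the paper: compare the given solution with the flat transonic shock solution $\fei_t$ translated so that the two fronts touch (the paper takes $t=\tau=\min_{\bar\Omega}f$), write a linear, uniformly elliptic equation for the difference in the subsonic region, and conclude via the strong maximum principle and the Hopf lemma, with the normal derivative at the touching/extremal points of $f$ pinned down by the Rankine--Hugoniot and Bernoulli relations (so Hopf fails unless the difference is constant), the slip condition handled by reflection at $\Gamma$ (plus the perpendicularity of $S$ to $\Gamma$), and the sign of $c_1^2-(u^+)^2$ in the oblique condition on $\Sigma_1$ yielding the necessity of $c_1=u^+$. One caution on your ``touch from the other side'' variant: taking $t=\max f$ places the flat shock-front inside $D^+$, where $\fei_t$ is only Lipschitz and supersonic upstream of its front, so the linearized equation for the difference is no longer uniformly elliptic with bounded coefficients there; the paper sidesteps this by keeping the single comparison solution $\fei_\tau$ and splitting into the cases $u^+\ge c_1$ and $u^+<c_1$, applying Hopf at the minimum or maximum of $\psi=\fei_\tau-\fei$, whose extremal points on $S$ automatically occur at extrema of $f$ where $\nabla f=0$.
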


This result especially implies that, if the pressure is posed at the
exit, then the boundary value problem is ill-posed in most cases,
and the special transonic shock solutions $\fei_t$ that are widely
studied are not physically stable (cf. \cite{CY,Yu1,Yu2}).  We
remark that, unlike the previous works
\cite{ChF1,ChF2,ChF3,CCF,CY,XY,Yu2,Yu3} where the stability of the
special solutions was studied under small perturbations of the
upstream supersonic flow \eqref{104} or the shape of the wall
$\Gamma$ of the duct, our results do not require such small
perturbations. Our proof is global and based on the
maximum/comparison principle and a judicious choice of special
transonic shock solutions as a comparison solution. It reveals the
basic uniqueness property of such special transonic shock solutions.

\medskip
Next, we focus on the uniqueness of transonic shock solutions in
semi-infinite duct $D'=(-1,\infty)\times \Omega$, with the following
assumption only for the case $\gamma=1$ that

\medskip
\noindent {\rm ($H_2$)}\qquad there exists $k_0>0$ such that
$|\nabla\fei|$ is bounded in $(k_0,\infty)\times \Omega$.
\medskip
%

We remark that this assumption is automatically satisfied for
potential flow with $\gamma>1$, since the velocity should be less
than the critical value $\sqrt{2(b_0+\frac{1}{\gamma-1})}$ due to
the Bernoulli law and the fact that the constant $b_0$ has been
fixed by the supersonic data at the entry.

Let $\Gamma=(-1,\infty)\times\p\Omega.$  We have the following
result:

\begin{theorem}\label{thm102}
Consider problem \eqref{103}--\eqref{104} and \eqref{0107} with $D$
replaced by $D'$ 
and \eqref{106} replaced by {\rm $(H_2)$}. Assume that, for the
solution $\fei$, $|D^2\fei|$ is also bounded in $(k_0,\infty)\times
\Omega$. Then this problem is solvable for transonic shock
solutions, and the solution is unique modulo translation.
\end{theorem}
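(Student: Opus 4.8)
The plan is to reduce Theorem~\ref{thm102} to Theorem~\ref{thm101} by showing that a transonic shock solution in the semi-infinite duct $D'$ must, far downstream, settle down to a uniform subsonic state, so that $|\nabla\fei^+|$ effectively plays the role of the exit datum $c_1$. First I would record the structure already available: by Lemma~\ref{lem101} the supersonic part is forced to be $\fei^-=u^-x_1$ in $D^-$ (the energy argument for hyperbolic equations is unaffected by the duct being infinite, since $D^-$ is contained in a slab $(-1,t)\times\Omega$ for the shock location $t$), and the Bernoulli constant $b_0$ is fixed by the entry data. Thus the whole problem is again a free boundary problem for the subsonic potential $\fei^+$ on $D^+$ together with the shock-front $S=\{x_1=f(x_2,x_3)\}$, with the Rankine--Hugoniot and entropy conditions \eqref{rh}--\eqref{entropy} on $S$, the slip condition \eqref{0107} on $\Gamma$, and now only the boundedness hypothesis $(H_2)$ (for $\gamma=1$) or its automatic counterpart (for $\gamma>1$), plus boundedness of $|D^2\fei|$, in the downstream end $(k_0,\infty)\times\Omega$.

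The heart of the matter is a Liouville-type / asymptotic step: a bounded subsonic solution of \eqref{103-a} on a half-infinite duct $(k_0,\infty)\times\Omega$ with the slip condition on the lateral wall must converge as $x_1\to\infty$ to a constant-gradient solution $u^+x_1+\text{const}$, where necessarily $u^+\in(0,c_*)$. I would get this by the standard translation-compactness argument: set $\fei_k^+(x_1,x_2,x_3):=\fei^+(x_1+k,x_2,x_3)-\fei^+(k,\,\cdot\,)$ evaluated at a fixed interior point, use the uniform bounds on $|\nabla\fei^+|$ and $|D^2\fei^+|$ together with interior and boundary Schauder estimates for the uniformly elliptic equation \eqref{103-a} (uniform ellipticity holds because the flow stays subsonic and, by $(H_2)$, uniformly away from sonic is what one must also check — see the obstacle below) to extract a subsequence $\fei_{k_j}^+\to\psi$ in $C^2_{loc}$ of the bi-infinite duct $\R\times\Omega$, with $\psi$ a bounded-gradient entire subsonic solution satisfying the slip condition. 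An energy identity on $(-R,R)\times\Omega$, integrating $\di(\rho(|\nabla\psi|^2)\nabla\psi)\cdot(\psi-\ell)$ against a suitable linear reference $\ell$, forces $\nabla\psi$ to be constant; the slip condition on $\partial\Omega$ then forces that constant to be $u^+e_1$ for the unique value $u^+$ of Lemma~\ref{lem103}. One also obtains that $f$ is asymptotically flat and that the shock strength at infinity matches the $(u^-,u^+)$ jump.

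Given this asymptotic normalization, I would then invoke exactly the comparison-principle machinery of Theorem~\ref{thm101}: compare the given solution $\fei$ with the flat transonic shock $\fei_t$ of Lemma~\ref{lem103} for $c_1=u^+$, using $\fei_{t}$ as a barrier from above and below. On the truncated duct $(-1,N)\times\Omega$ one runs the maximum/comparison argument for the difference of the subsonic potentials; the new ingredient is controlling the artificial downstream boundary $\{x_1=N\}$, which is handled by letting $N\to\infty$ and using that both $\nabla\fei^+$ and $\nabla\fei_t^+=u^+e_1$ have the same limit, so the boundary contribution vanishes in the limit. This yields $\fei\equiv\fei_t$ for the appropriate $t$, giving both existence (from Lemma~\ref{lem103}, which produces $\fei_t$ for any $t\in(-1,\infty)$ once $D$ is replaced by $D'$) and uniqueness modulo translation.

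The main obstacle is the subsonic-nondegeneracy at infinity: hypothesis $(H_2)$ only asserts $|\nabla\fei|$ is \emph{bounded}, not bounded away from the sonic speed $c_*$, so a priori the limit equation for $\psi$ could degenerate toward the parabolic/hyperbolic boundary and the Schauder estimates could fail along the way. I expect to close this by a continuity/no-sonic-bubble argument: the entropy condition \eqref{entropy} pins $|\nabla\fei^+|<|\nabla\fei^-|=u^-$ on $S$, the maximum principle applied to the subsonic equation (in the form satisfied by velocity components or by $|\nabla\fei|^2$ in regions where the equation is uniformly elliptic) propagates an interior gap from sonic, and the flat comparison solution $\fei_t$ provides the explicit sub/supersolution certifying $|\nabla\fei^+|$ stays in a compact subinterval of $(0,c_*)$; once that is in hand the elliptic estimates are uniform and the translation-compactness step goes through. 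For $\gamma>1$ this nondegeneracy is cheaper, since the Bernoulli law already caps the speed, but one still needs the lower barrier to exclude the flow slowing to a stagnation point; the flat solution $\fei_t$ again serves as that barrier.
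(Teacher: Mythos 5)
Your plan hinges on first proving that the flow settles down to the uniform subsonic state $u^+e_1$ as $x_1\to\infty$, and that step needs the nonlinear equation \eqref{103-a} to be uniformly elliptic near the open end, i.e.\ $|\nabla\fei|\le\tilde c<c_*$ there. Hypothesis $(H_2)$ gives only boundedness (and for $\gamma>1$ only the Bernoulli bound, which exceeds $c_*$), and the ``no-sonic-bubble'' fix you sketch does not close this: the entropy condition \eqref{entropy} controls the speed only on the shock-front, an interior maximum principle for $|\nabla\fei|^2$ does not prevent $\sup_{x_1>k}|\nabla\fei|\to c_*$ as $k\to\infty$ (the supremum can be approached along the open end, where there is no boundary datum), and it is not explained in what sense $\fei_t$ is a barrier for the speed. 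In effect you would be deriving $(H_2')$ of Theorem \ref{rem3} plus a full asymptotic state from $(H_2)$ alone, which is stronger than the theorem itself and exactly the kind of asymptotic information the statement is designed to avoid (see the remark following Theorem \ref{thm102}: imposing a limit such as \eqref{116} is overdetermined or superfluous). Even granting uniform subsonicity, the Liouville classification of entire bounded-gradient subsonic flows in $\R\times\Omega$, the identification of the limiting speed as $u^+$ (which requires the mass-flux identity $\int_\Omega\rho\,\p_{x_1}\fei\,dx_2dx_3=\rho^-u^-|\Omega|$, not the slip condition alone), and the upgrade from subsequential to genuine convergence are all left as sketches; and the final truncation step is shaky: knowing $\nabla\psi\to0$ does not control $\psi$ on $\{x_1=N\}$ (it may still drift sublinearly), so ``the boundary contribution vanishes'' is not a maximum-principle statement --- the possibility that the extremum of $\psi$ escapes to $x_1=\infty$ is precisely the crux of the problem.

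The paper's proof avoids all of this and never touches the asymptotics. It sets $\psi=\fei_\tau-\fei$ with $\tau=\min_{\bar\Omega}f$ and uses that the linear equation \eqref{112} for $\psi$ has principal coefficients depending only on the flat comparison solution $\fei_\tau$ (the constant subsonic state $u^+e_1$), so uniform ellipticity is automatic with no nondegeneracy assumption on $\fei$; boundedness of $|\nabla\fei|$ and $|D^2\fei|$ enters only to bound the lower-order coefficients uniformly. Then the strong maximum principle and Hopf lemma run as in Theorem \ref{thm101} whenever an extremum of $\psi$ is attained on $\bar S$, and the one new scenario --- both extrema attained only as $x_1\to\infty$ --- is excluded by translating $\psi$ back to a fixed cylinder and applying a boundary Harnack inequality for oblique derivative problems with constants uniform in the translation, exploiting that each far slice contains both a near-extremal point and a zero of $\psi$. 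If you wish to salvage your route, either strengthen the hypothesis to $(H_2')$ (which is Theorem \ref{rem3}) or restructure the comparison so that, as in \eqref{112}, the second-order coefficients are evaluated on the flat solution and uniform ellipticity comes for free.
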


\begin{remark} This result indicates that, for transonic shock solutions in
semi-infinitely long ducts, there should be no additional asymptotic
condition such as
\begin{eqnarray}\label{116}
\lim_{x_1\rightarrow\infty}\max_{(x_2,x_3)\in\Omega}
|\nabla\fei(x_1,x_2,x_3)|=c_1.
\end{eqnarray}
Otherwise, it is either overdetermined or superfluous. We just need
the reasonable assumptions that the velocity and acceleration of the
flow are bounded. This indicates that the apriori assumptions on the
asymptotic behavior of transonic shock solutions in an infinitely
long duct in \cite{ChF2,ChF3,CCF} may not be necessary.
\end{remark}

\begin{theorem}\label{rem3} In Theorem {\rm \ref{thm102}}, if we
replace {\rm $(H_2)$} by the following stronger assumption:

\medskip
\noindent {\rm $(H_2')$}\qquad \qquad $|\nabla\fei|<\tilde{c}<c_*$\
\qquad in \ $(k_0,\infty)\times \Omega$ \, \, with $\tilde{c}$ a
constant,
\medskip

\noindent that is, the far-away-flow field behind the shock-front is
always subsonic, then the requirement that  $|D^2\fei|$ is bounded
can be removed.
\end{theorem}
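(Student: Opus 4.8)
The plan is to derive Theorem~\ref{rem3} from Theorem~\ref{thm102} by showing that, under $(H_2')$, the bound on $|D^2\fei|$ is not an extra hypothesis but a consequence of the other assumptions. The crucial observation is that $(H_2')$ forces equation \eqref{103-a} for $\fei^+$ to be \emph{uniformly} elliptic on the half-duct $(k_0,\infty)\times\Omega$: there $|\nabla\fei|\le\tilde c<c_*$, so the density $\rho(|\nabla\fei|^2)$ stays between two positive constants, while the eigenvalues of the coefficient matrix $\rho\,\delta_{ij}+2\rho'\p_i\fei\,\p_j\fei$ — namely $\rho$ (twice) and $\rho\big(1-|\nabla\fei|^2/c^2(\rho)\big)$ in the direction of $\nabla\fei$ — stay bounded above by $\rho(\text{0})$ and below by a positive constant, since $\tilde c<c_*$ keeps $|\nabla\fei|^2/c^2(\rho)$ bounded away from $1$. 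This is precisely the property that may fail under $(H_2)$ alone, where $|\nabla\fei|$ is only known to be bounded and could approach the sonic value $c_*$ as $x_1\to\infty$; that is why in Theorem~\ref{thm102} the bound on $|D^2\fei|$ had to be assumed.

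Granting uniform ellipticity, I would run the standard elliptic bootstrap, keeping all estimates uniform in the $x_1$-direction. Differentiating \eqref{103-a}, each component $\p_k\fei$ solves a linear, divergence-form, uniformly elliptic equation with bounded measurable coefficients; differentiating the slip condition \eqref{0107} along the wall, $\p_1\fei$ satisfies a homogeneous conormal condition on $\Gamma$, while $\p_2\fei,\p_3\fei$ satisfy inhomogeneous ones whose right-hand sides are controlled by $\nabla\fei$ and the second fundamental form of $\p\Omega$ (both bounded, since $\p\Omega\in C^3$). The De Giorgi--Nash--Moser estimates, applied on unit cylinders in the interior and up to $\Gamma$, then give $\nabla\fei\in C^{\alpha}\big((k_0+1,\infty)\times\bar\Omega\big)$ with an $x_1$-independent bound. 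Hence the coefficients of \eqref{103-a}, rewritten in non-divergence form, are uniformly $C^{\alpha}$, and the interior and Neumann boundary Schauder estimates — applied to $\fei$ minus the affine function matching $\fei$ and $\nabla\fei$ at the centre of each unit cylinder — yield that $|D^2\fei|$ is bounded on $(k_0+2,\infty)\times\Omega$.

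Since $(H_2')$ implies $(H_2)$ and $|D^2\fei|$ is now known to be bounded on a half-duct, the solution $\fei$ meets all the hypotheses of Theorem~\ref{thm102}, so the solvability together with uniqueness modulo translation follows immediately. The one point to be careful about is the \emph{uniformity} (translation invariance in $x_1$) of the elliptic estimates and, in particular, carrying them up to the lateral wall with the Neumann condition; once uniform ellipticity is in hand this is routine, it uses the regularity $\p\Omega\in C^3$, and — unlike the rest of the argument for Theorem~\ref{thm102} — it genuinely needs $(H_2')$ rather than merely $(H_2)$. No new ideas beyond those already employed for Theorem~\ref{thm102} are required.
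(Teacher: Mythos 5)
Your proposal is correct and follows essentially the same route as the paper: use $(H_2')$ to get uniform ellipticity in the far field, differentiate \eqref{103-a} so that the components of $\nabla\fei$ solve divergence-form uniformly elliptic equations, apply De Giorgi--Nash--Moser estimates (interior and up to the wall, uniformly in $x_1$ via unit-size cylinders) to get a uniform $C^{\alpha}$ bound on $\nabla\fei$, and then a second elliptic estimate with H\"older coefficients to bound $|D^2\fei|$, after which Theorem \ref{thm102} applies. The only cosmetic difference is that the paper handles the wall by reflection and applies the $C^{1,\alpha}$ estimate (GT Theorems 8.24 and 8.32) directly to $w=\p_{x_s}\fei$, whereas you differentiate the slip condition and finish with Schauder estimates for $\fei$ itself; both are the same standard bootstrap.
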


In the rest of this paper,  Sections 2--4, we establish Theorems
\ref{thm101}--\ref{rem3}, respectively.

\section{Proof of Theorem \ref{thm101}}

We divide the proof into three steps.

\smallskip
{\it Step 1}. Let $\fei$  be a transonic shock solution of problem
\eqref{103}--\eqref{0107}, and let $S$ be the corresponding
shock-front with equation $x_1=f(x_2,x_3)$, and
$\tau=\min_{\bar{\Omega}}\{f(x_2,x_3)\}$. Then
\begin{eqnarray}\label{add1}
\fei=u^-f(x_2,x_3) \qquad\text{on}\,\, S,
\end{eqnarray}
since the potential $\fei$ is continuous across the shock-front.

\medskip
{\it Step 2}. We notice that \eqref{add1} and the Neumann condition
\eqref{0107} imply that $S$ is perpendicular to $\Gamma$. In fact,
for $P\in\Gamma\cap\overline{S}$, let the normal vector of $\Gamma$
at $P$ be $n=(0,n_2,n_3)$, and denote the normal vector of $S$ at
$P$ to be $\nu=(1, -\p_{x_2} f, -\p_{x_3}f)$. We now show that
$n\cdot\nu=0$.

By \eqref{add1}, we obtain that, at $P$,
\begin{eqnarray}
\p_{x_i}\fei+\p_{x_1}\fei\p_{x_i}f=u^-\p_{x_i}f \qquad\mbox{for
$i=2,3$.}
\end{eqnarray}
Therefore, by \eqref{0107},
\begin{eqnarray}\label{add2}
(u^--\p_{x_1}\fei)\sum_{i=2}^3n_i\p_{x_i}f
=\sum_{i=2}^3n_i\p_{x_i}\fei_i=0.
\end{eqnarray}
By \eqref{entropy}, after passing $S$, we have $\p_{x_1}\fei<u^-$.
Therefore, $\sum_{i=2}^3n_i\p_{x_i}f =0$ and $n\cdot\nu=0$.

\smallskip
{\it Step 3}. Now let  $\fei_a:=\fei_{\tau}$ be the transonic shock
solution constructed in Lemma \ref{lem103},  $D^+$ and $D_a^+$ be
the corresponding subsonic region of $\fei$ and $\fei_a$, and
$D^*:=D_a^+\cap D^+=D^+$. Then $\psi=\fei_a-\fei$ satisfies the
linear, uniformly elliptic equation:
\begin{eqnarray}
&&L\fei=\sum_{i,j}a_{ij}(x)\p_{x_ix_j}\psi + \sum_{i} b_i(x)\p_{x_i}\psi\nonumber\\
&&\quad\ \ \, :=\rho(|\nabla\fei_a|^2)\Delta\psi
-2\sum_{i,j}\rho'(|\nabla\fei_a|^2)\p_{x_i}\fei_a\p_{x_j}\fei_a\p_{x_ix_j}\psi\nonumber\\
&&\qquad\qquad+\Delta\fei\big(\rho(|\nabla\fei_a|^2)-\rho(|\nabla\fei|^2)\big)\nonumber\\
&&\quad\quad\ \
-2\sum_{i,j}\p_{x_ix_j}\fei\Big(\rho'(|\nabla\fei_a|^2)\p_{x_i}\fei_a\p_{x_j}\fei_a
-\rho'(|\nabla\fei|^2)\p_{x_i}\fei\p_{x_j}\fei\Big)\nonumber\\
&&\quad\ \ \, =0 \qquad\qquad \text{in}\ \ D^*. \label{112}
\end{eqnarray}
Since $D$ is bounded, by our assumption $\varphi\in C^2$,
$|\nabla^2\varphi|$ is bounded in $D^+$.

The boundary conditions are
\begin{eqnarray}
&\nabla(\fei_a+\fei)\cdot\nabla\psi=(u^+)^2-c_1^2 \qquad
&\text{on}\,\, \Sigma_1,
\label{1160}\\
&\nabla\psi\cdot n=0 \qquad &\text{on}\,\, \Gamma\cap\overline{D^*}.
\label{117}
\end{eqnarray}
By $(H_1)$,  they are both the oblique derivative conditions. The
boundary condition on $\Sigma^*=\{(x_1,x_2,x_3)\,:\,
x_1=\max\{\tau,f(x_2,x_3)\}=S$ is
\begin{eqnarray}
\psi=g(x_2,x_3):=(u^+-u^-)(f(x_2,x_3)-\tau)\le0. \label{118}
\end{eqnarray}
Note that there exists $Y\in\bar{\Omega}$ such that $f(Y)=\tau$, so
\begin{eqnarray}\label{zero}
\psi(f(Y), Y)=g(Y)=0.
\end{eqnarray}

We now prove that $\psi\equiv0$  in $D^*$. By \eqref{zero}, it
suffices to show  that $\psi$ is a constant. There are two cases.

\smallskip
\textsc{Case A.}\ $u^+\ge c_1.$ By the strong maximum principle,
$m=\min_{D^*}\psi$ can be achieved only on $\p D^*$ unless $\psi$ is
a constant.

By the Hopf lemma (cf. Lemma 3.4 in \cite{GT}), the minimum $m$ of
$\psi$ can be achieved only on $\bar{S}$ or
$\Gamma\cap\overline{\Sigma_1}$, but not the lateral boundary
$\Gamma$ and the exit $\Sigma_1$ unless $\psi$ is a constant.

\smallskip
(i) Suppose that $m$ is achieved at a point
$P\in\Gamma\cap\overline{\Sigma_1}$. By a locally even reflection
with respect to $\Gamma$ and noting that $\Gamma$ is perpendicular
to $\Sigma_1$ at $P$, $P$ satisfies the interior sphere condition in
the extended neighborhood, as well as $\p_{x_1}\psi\ge0$ due to
\eqref{1160} and \eqref{117}, a contradiction to the Hopf lemma
unless $\psi$ is a constant.

\smallskip
(ii) Suppose that $m$ is achieved on $\overline{S}$. Then $m\le0.$

(a). Let $m=g(X)$ for some $X\in{\Omega}$. Note that, by
\eqref{118}, $\nabla f(X)=0$, so $\nu=(1,0,0)$. By the
Rankine-Hugoniot condition \eqref{rh} and the Bernoulli law
\eqref{102}, as in Lemma \ref{lem103}, we can solve that
\begin{equation}\label{209}
\p_{x_1}\fei(X)=\p_{x_1}\fei_a(X)=u^+.
\end{equation}
Hence,
\begin{equation}\label{210}
\nabla\psi(f(X),X)\cdot \nu(f(X),X)=0.
\end{equation}
By the Hopf lemma, it is impossible unless $\psi$ is
constant.

(b). Let $m=g(X)$ for some $X\in\p\Omega$. Then it is still
necessary to hold $\nabla f(X)=0$ due to the orthogonality of $S$
and $\Gamma$. We also need a locally reflection argument as in (i)
to apply the Hopf lemma to infer that $\psi$ is a constant as in (a)
by \eqref{210}.
%


\medskip \textsc{Case B.}\ $u^+< c_1$. Similar to the analysis in Case A,
now the maximum $M$ of $\psi$ in $D^*$  can be achieved only on
$\bar{S}$ unless $\psi$ is a constant. According to \eqref{118},
$M=0$ and  we may also obtain
$\nabla\psi(f(Y),Y)\cdot\nu(f(Y),Y)=0$, a contradiction to the Hopf
lemma unless $\psi$ is constant.

\medskip
Therefore, $\psi\equiv 0$.  This implies that, for $c_1\ne u^+$,
there is no solution; for $c_1=u^+$, the solution $\fei_\tau$ is
unique (i.e., for any given $\tau\in(-1,1)$, there is only one
transonic shock with its front passing a point in
$\{\tau\}\times\bar{\Omega}$, which is exactly $\fei_\tau$). Since
$\fei_{t+\tau}(x_1,x_2,x_3)=\fei_\tau(x_1-t,x_2,x_3)$, then, for
$c_1=u^+$, the solution to problem \eqref{103}--\eqref{0107} is
unique modulo translation in the $x_1$--direction.

This completes the proof.

\begin{remark}
Note that, in the proof, we need the assumption only that the
downstream flow on the shock-front is subsonic (see \eqref{209} and
\eqref{add2} above). We do not need to assume that the flow behind
the transonic shock-front is always subsonic. In addition, the
assumption $c_1<c_*$ is needed just to guarantee that the transonic
shock-front is restricted in $D$.
\end{remark}

\section{Proof of Theorem \ref{thm102}}

We divide the proof into three steps.

\medskip
{\it Step 1}. Let $\fei$ be a transonic shock solution of problem
\eqref{103}--\eqref{0107} with $D$ replaced by $D'$ and \eqref{106}
replaced by $(H_2)$, as well as $|D^2\fei|$ is bounded in its
subsonic region. Denote its shock-front as
$S=\{(f(x_2,x_3),x_2,x_3)\,:\, (x_2,x_3)\in\Omega\}$. Let
$\tau=\min_{\bar{\Omega}} f$ and $\psi=\fei_\tau-\fei$, where
$\fei_\tau$ is the special transonic shock solution constructed in
Lemma \ref{lem103}. The key point is to show either the maximum or
the minimum of $\psi$ in $D'_*:=\{(x_1,x_2,x_3)\in D'\,:\,
x_1>f(x_2,x_3)\}$ is achieved on $S$. With this, then the rest of
proof is the same as that for Theorem \ref{thm101}. To achieve this,
we now show the following case, Case A, is impossible if $\psi$ is
not a constant.

\medskip
{\it Step 2}. Case A: {\it Both the maximum (might be $\infty$) and
the minimum (might be $-\infty$) are achieved as
$x_1\rightarrow\infty$}.

\medskip
We deduce below that, for this case, there is a contradiction if
$\psi$ is not a constant.


Define $D'_L=\{(x_1,x_2,x_3)\in D'\,:\, f(x_2,x_3)< x_1< L\}$, for
$L$ large, such that $S\subset D'_L$. Without loss of generality, we
assume $M_L=\max_{D'_L}\psi>0, m_L=\min_{D'_L}\psi<0$ (cf.
\eqref{118}), since, for the case $M_L=0$ or $m_L=0$, we can still
apply the Hopf lemma to show $\psi\equiv0$.

Note that $M_L$ is monotonically increasing, while $m_L$ is
monotonically decreasing, as $L$ increases. In this case (Case A),
by assumption, for large $L$, both $M_L$ and $m_L$ are achieved on
$\Sigma_L=\{L\}\times{\bar{\Omega}}$ unless $\psi$ is constant
(which is what we want to prove). Since $\Omega$ is bounded and
$|\nabla\psi|\le |\nabla\fei|+u^+$ is bounded according to $(H_2)$,
we conclude that both $m=\lim_{L\rightarrow\infty}m_L<0$ and
$M=\lim_{L\rightarrow\infty}M_L>0$ are finite. Thus, $\psi$ is a
bounded solution.

Now choose a sequence $\{L_k\}_{k=1}^\infty$ that tends to infinity.
On $\Sigma^{k}:=\Sigma_{L_k}$, we suppose
$M^k:=M_{L_k}=\psi(L_k,X_k)>0,\ m^k:=m_{L_k}=\psi(L_k,Y_k)<0$ for
$X_k, Y_k\in\Omega$.  Since $\fei$ is continuous, there exists
$Z_k\in\bar{\Omega}$ such that $\psi(L_k,Z_k)=0$. 



The following arguments are adopted from \cite{BD}.  Consider
$B_k:=(L_k-2,L_k+2)\times\Omega$ in $\R^3$, by suitable translation,
each $B_k$ ($k\in \mathbb{Z}$) may be transformed onto
$B:=(-2,2)\times\Omega$.

Let $\psi_k(Y)=\psi((L_k,0)+ Y), k=k_0,k_0+1,\cdots$, which is
defined on $B$ and satisfies the linear elliptic equation:
\begin{eqnarray}\label{16}
\sum_{i,j=1}^3a_{ij}^{(k)}(Y)\p_{y_iy_j}\psi_k+\sum_{j=1}^3
b_j^{(k)}(Y)\p_{y_j}\psi_k=0,
\end{eqnarray}
where $a_{ij}^{(k)}(Y)=a_{ij}((L_k,0)+ Y)$ and
$b^{(k)}_i(Y)=b_i((L_k,0)+ Y)$. Obviously, the ellipticity constants
of these equations are the same, and the coefficients are also
uniformly bounded.

Now let $K=(-1,1)\times\Omega$ be a relatively open set of
$B\cup((-2,2)\times\Gamma)$ and  $\bar{K}\subset
B\cup((-2,2)\times\Gamma)$.

Consider $v_k(Y)=M-\psi_k(Y)$, which is positive by definition of
$M$. By \eqref{16}, we have
\begin{eqnarray}
\sum_{i,j=1}^3a_{ij}^{(k)}(Y)\p_{y_iy_j}v_k+\sum_{j=1}^3
b_j^{(k)}(Y)\p_{y_j}v_k=0.
\end{eqnarray}
Applying the boundary Harnack inequality for the oblique derivative
problems (cf. \cite{BCN}, Theorem 2.1, or \cite{P}) to $v_k$, we
have
\begin{eqnarray}
C (M-\psi_k(Y_1))\le C\sup_{\bar{K}}v_k<\inf_{\bar{K}}v_k\le
(M-\psi_k(Y_2))
\end{eqnarray}
for any $Y_1, Y_2\in\bar{K}$, and the positive constant $C$ is
independent of $k$. Taking $Y_1=(0,Z_k)$ and $Y_2=(0,X_k)$, and
letting $k\rightarrow\infty$, we obtain
\begin{eqnarray}
CM\le0,
\end{eqnarray}
which is a contradiction to our assumption that $M>0$.

Therefore, Case A is impossible if $\psi$ is not constant.

\medskip
{\it Step 3}. Suppose that $\psi$ is not a constant. By the strong
maximum principle applied in the domain $D'_*$,  at least one of the
maximum and minimum should be achieved on $\bar{S}$. Applying the
Hopf lemma on $\bar{S}$ as in Section 2, we may also infer
$\psi\equiv 0$. This also contradicts our assumption that $\psi$ is
not a constant.

Therefore, we conclude that $\psi$ is constant. By our choice of
$\tau$, it should be $0$ in $D_*'$. This completes the proof of
Theorem \ref{thm102}.

\section{Proof of Theorem 1.3}

We now show that, for a $C^2$ transonic shock solution $\fei$ to
equations \eqref{101}--\eqref{102}, if $(H_2')$ holds, then
$|D^2\fei|$ is bounded in the unbounded subsonic region. The proof
can be achieved by standard elliptic arguments as sketched below.

{\it Step 1}.\ {\it Decomposition of unbounded domain.} \ For $k\in
\mathcal{Z}$ (positive integers), let
\begin{eqnarray}
D_k=(k-1,k+2)\times\Omega, \qquad
D'_k=(k-\frac12,k+\frac32)\times\Omega.\nonumber
\end{eqnarray}
Clearly, we have $ \dist(D_k, D'_k)=\frac12. $

\medskip
{\it Step 2}. \ {\it Uniformly boundary H\"{o}lder estimate of
gradient and second order derivatives of $\fei$.}\

 For $s=1,2,3,$
let $w=\p_{x_s}\fei$. Then, by differentiating \eqref{101} and
\eqref{102} with respect to $x_s$, we have
\begin{eqnarray}\label{309}
\sum_{i,j}\p_{x_i}(A^{ij}\p_{x_j}w)=0,
\end{eqnarray}
where
\begin{eqnarray}\label{310}
A^{ij}=\rho(|\nabla\fei|^2)\delta_{ij}+2\rho'(|\nabla\fei|^2)\p_{x_i}\fei\p_{x_j}\fei,
\end{eqnarray}
$\delta_{ii}=1$, and $\delta_{ij}=0$ when $i\ne j$ for $i,j=1,2,3$.

By $(H_2')$, this is a uniformly elliptic equation and $A^{ij}$ are
uniformly bounded (independent on $k$).

Now, for large $k>k_0$ such that $D_k$ lies in the subsonic region,
consider equation \eqref{309} in $D'_k$.

For any point $P\in\Gamma\cap\p D'_k$, by  standard  localized
flattening and reflection arguments (here we use the Neumann
boundary condition), in a ball-like neighborhood $B_{3\epsilon}(P)$
of $P$ with the radius $3\epsilon$ depending only on $\Omega$, we
can get by Theorem 8.24 in \cite{GT} that
\begin{eqnarray}
\norm{w}_{C^{\alpha}(\overline{B_{2\epsilon}(P)})}\le
C\norm{w}_{L^2(B_{3\epsilon}(P))}\le C'.
\end{eqnarray}
Note here that $C, C'$, and $\alpha\in(0,1)$ are independent of
$\fei$ and $k$. The second inequality follows from the fact that $w$
is bounded.

Now we analyze equation \eqref{309} whose coefficients, after
extension as above, satisfy
\begin{eqnarray}
\|A^{ij}\|_{C^{\alpha}(B_{2\epsilon}(P))}\le K,
\end{eqnarray}
for a constant $K$ independent of $k>k_0$.  Therefore, by Theorem
8.32 in \cite{GT}, we see
\begin{eqnarray}
\norm{w}_{C^{1,\alpha}(B_\epsilon(P))}\le
C\norm{w}_{C^0(B_{2\epsilon(P)})}\le C''.
\end{eqnarray}
The constants $C$ and $C''$ are independent of $k$, and the second
inequality follows also from the boundedness of $|\nabla\fei|$.

Since $\overline{\Gamma\cap\p D'_k}$ is compact, the number $J$ with
$\overline{\Gamma\cap\p D'_k}\subset \cup_{j=1}^J B_\epsilon(P_j)$
for $P_j\in\Gamma\cap\p D'_k$ is independent of $k$. Then we obtain
a uniform boundary H\"{o}lder estimate of gradient of $w$ with $C$
independent of $k$:
\begin{eqnarray}\label{4.6}
\norm{w}_{C^{1,\alpha}(D_k\cap(\cup_{j=1}^J B_\epsilon(P_j))}\le C.
\end{eqnarray}

\smallskip
{\it Step 3.}\ {\it Uniformly global H\"{o}lder
estimate of gradient and second order derivatives of $\fei$.}\

Choose open sets $D_k^{0}\Subset \hat{D}_k^{0}\Subset D_k$ such that
$D_k-D_k^0\subset U_{j=1}^J B_\epsilon(P_j)$. The elliptic interior
estimate (cf. Theorem 8.24 in \cite{GT}) to equation \eqref{309} in
$\hat{D_k^0}$ tells us that
%
\begin{eqnarray*}
\norm{w}_{C^{\alpha'}(\overline{\hat{D}_k^0})}\le
C\norm{w}_{L^{\infty}(D_k)}
\le C'
\end{eqnarray*}
with $\alpha'\in(0,1)$ and $C,C'>0$ independent of $k$. Without loss
of generality, we assume that $\alpha'\le\alpha$.

Then we see that
$\|A^{ij}\|_{C^{\alpha'}(\overline{\hat{D}_k^0})}\le K$ holds for
$K$ independent of $k$. Now utilizing Theorem 8.32
in \cite{GT} as above, we have
\begin{eqnarray}\label{4.8}
\norm{w}_{C^{1,\alpha'}( \overline{D_k^0})}\le
C\norm{w}_{L^{\infty}(D_k)}
\le C''
\end{eqnarray}
with $C''$ independent of $k$.

Combining \eqref{4.6} with \eqref{4.8}, we conclude $
\|D^2\varphi\|_{C^{\alpha'}(\overline{D_k})}\le \tilde{C} $ with
$\tilde{C}>0$ independent of $k$. This completes the proof of
Theorem 1.3.

\bigskip
{\bf Acknowledgments.} Gui-Qiang Chen's research was supported in
part by the National Science Foundation under Grants DMS-0807551,
DMS-0720925, and DMS-0505473, and the Natural Science Foundation of
China under Grant NSFC-10728101. Hairong Yuan's research was
supported  in part by China Postdoctoral Science Foundation
$(20070410170)$, Shanghai Shuguang Program (07SG29), Fok Ying Tung
Foundation (111002), Shanghai Rising Star Program (08QH14006), and
the National Science Foundation (USA) under Grant DMS-0720925.
The authors thank Mikhail Feldman, Yaguang Wang, and Li Liu for
stimulating discussions and comments.


\end{document}